\theoremstyle{plain}
\theoremstyle{plain}
\newtheorem{theorem}{Theorem}
\newtheorem{proposition}{Proposition}
\newtheorem{lemma}{Lemma}
\newtheorem{corollary}{Corollary}
\theoremstyle{definition}
\theoremstyle{remark}
\DeclareMathOperator{\E}{E}
\DeclareMathOperator{\diag}{diag}
\DeclareMathOperator{\rank}{rank}
\newcommand{\N}{\mathbb{N}}
\newcommand{\C}{\mathbb{C}}
\newcommand{\Z}{\mathbb{Z}}
\newcommand{\x}{\mathbf{x}}
\newcommand{\cA}{\mathcal{A}}
\newcommand{\cL}{\mathcal{L}}
\newcommand{\bA}{\mathbf{A}}
\newcommand{\bB}{\mathbf{B}}
\newcommand{\bC}{\mathbf{C}}
\newcommand{\bbf}{\mathbf{f}}
\newcommand{\bJ}{\mathbf{J}}
\newcommand{\bl}{\mathbf{l}}
\newcommand{\bm}{\mathbf{m}}
\newcommand{\bX}{\mathbf{X}}
\newcommand{\bY}{\mathbf{Y}}
\newcommand{\rL}{\mathrm{L}}
\newcommand{\rM}{\mathrm{M}}
\newcommand{\rP}{\mathrm{P}}
\newcommand{\rW}{\mathrm{W}}
\newcommand{\0}{\mathbf{0}}
\begin{document}
\title
{Complete invariants for simultaneous similarity}
\author{Klaus Bongartz}
\address{Universit\"at Wuppertal,Wuppertal, Germany, \texttt{klausbongartz@online.de}}
\author{Shmuel Friedland}
\address{
 Department of Mathematics, Statistics and Computer Science,
 University of Illinois, Chicago, IL 60607-7045,
 USA, \texttt{friedlan@uic.edu}}
 \subjclass[2010]{
14L24,14L30,15A21,16G20, 32S60}

\keywords{
Simultaneous similarity and its invariants,  isomorphisms of modules,  separating stratification, orbits
}
 \date{May 14,  2026}
  	
\begin{abstract} 
Always dealing with an arbitrary field we consider the variety  $(k^{n\times n})^{p}$ under the action of $GL_{n}$  by simultaneous similarity.  We define discrete and continuous invariants which completely determine the orbits. The discrete invariants induce a disjoint decomposition of the variety into finitely many locally closed $GL_{n}$-stable subsets and  for each of these 
we construct finitely many invariant morphisms to $k$  separating the orbits.  The complicated action of $GL_{n}$ by similarity is reduced to left multiplication of a product of $GL_{l_{i}}$'s on a product of $k^{l_{i}\times m_{i}}$'s.  An analogous result holds for the left-right action of $GL_{m}\times GL_{n}$ on $(k^{m\times n })^{p}$  and more generally for all varieties of finite dimensional modules over some finitely generated algebra.
\end{abstract}
\maketitle

\section{Introduction}\label{sec:intro}
Given a  $G$-variety $X$ - i.e. an algebraic variety $X$ endowed with an algebraic action  of an algebraic group $G$ - one would like to have a morphism $\pi:X \rightarrow Y$ whose non-empty fibres are the orbits. 
 In that case all orbits are closed in $X$ and so such a morphism seldom exists, but it exists generically by a fundamental result of Rosenlicht. This says that there is always a dense open $G$-stable subvariety $X'$ where such a morphism $\pi:X' \rightarrow k^{t}$ exists. Thus the components of $\pi$ are  finitely many invariant functions separating the orbits. Then one  can remove $X'$ from $X$, and start again with the action of $G$ on $X \setminus X'$ which has strictly smaller dimension than $X$. Therefore, this procedure ends after finitely many steps with a decomposition of $X$ into $G$-stable pairwise disjoint locally closed subvarieties $X_{i}$ admitting morphisms $\pi_{i}:X_{i} \rightarrow k^{t_{i}}$, and having as non-empty fibres the orbits. We call this a separating stratification, and we assume neither that the strata $X_{i}$ are irreducible nor that their closure is a union of some other strata.
 
This iterative ''algorithm'' is hard to realize.  Already the first step is difficult, even though among the many different proofs of Rosenlicht's theorem ( see \cite{R0,R,S,Kr,B4,Ke} ) there is a computational one.    To go on one has to find the irreducible components of $X\setminus X'$ which can be difficult, and then we repeat the algorithm  \cite{F0}.  
  
The main aim of the present article is to  describe in one stroke a separating stratification for the variety 
\begin{equation}\label{defVnp}
V(n,p):= (k^{n\times n})^{p}
\end{equation}
endowed with the action of $GL_{n}$  by simultaneous similarity. 
In particular, we obtain a complete set of invariants for simultaneous similarity.
Before we delve into details we formulate the underlying
simple idea:
\begin{lemma}\label{lem1} Let $X$ be a $G$-variety and $Y$ an $H$-variety. Suppose we have a morphism $\varphi:X \rightarrow Y$ such that the inverse image of an $H$-orbit is empty or a $G$-orbit. If $Y$ has a separating stratification $Y_{1},Y_{2},\ldots ,Y_{N}$ with morphisms $\pi_{i}:Y_{i}\rightarrow Z_{i}$  then the $\pi_{i}\circ \varphi:\varphi^{-1}Y_{i} \rightarrow Z_{i}$ for non-empty $\varphi^{-1}Y_{i}$ is  a separating stratification for $X$. 
\end{lemma}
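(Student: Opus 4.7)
The plan is to check, stratum by stratum, the three defining properties of a separating stratification applied to the pieces $\varphi^{-1}Y_i$: they should be locally closed and $G$-stable, pairwise disjoint with union all of $X$, and on each of them the composed morphism $\pi_i\circ\varphi$ should have its non-empty fibres equal to the $G$-orbits.

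First I would exploit that $\varphi$ is a morphism, hence continuous. Since each $Y_i$ is locally closed in $Y$, its preimage $\varphi^{-1}Y_i$ is locally closed in $X$; and since the $Y_i$ partition $Y$, the preimages partition $X$. So the topological content reduces to $G$-stability. Here the hypothesis on $\varphi$ is essential: every stratum $Y_i$ is $H$-stable (built into the notion of a separating stratification for the $H$-action), hence a disjoint union of $H$-orbits $O$. By assumption each $\varphi^{-1}O$ is either empty or a single $G$-orbit, and in particular $G$-stable; therefore $\varphi^{-1}Y_i=\bigsqcup_{O\subset Y_i}\varphi^{-1}O$ is $G$-stable as well.

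Next I would identify the fibres of the restriction of $\pi_i\circ \varphi$ to $\varphi^{-1}Y_i$. For any $z$ in the image, the fibre is $\varphi^{-1}(\pi_i^{-1}(z))$. By hypothesis on the separating stratification of $Y$, $\pi_i^{-1}(z)\subset Y_i$ is a single non-empty $H$-orbit; by the hypothesis on $\varphi$, its preimage in $X$ is either empty or exactly one $G$-orbit. Hence whenever the fibre is non-empty it is a $G$-orbit, as required. Discarding the indices $i$ for which $\varphi^{-1}Y_i$ is empty then yields the asserted separating stratification of $X$.

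There is really no hard step in this lemma: once the definitions are unpacked it is a clean bookkeeping argument. The substance of the statement lies not in its proof but in its hypothesis, namely that $\varphi^{-1}$ pulls $H$-orbits back to $G$-orbits. Verifying this hypothesis in the applications the paper has in mind — the reduction of simultaneous similarity to a left-multiplication action of a product of $\GL_{l_i}$'s on $\prod_i k^{l_i\times m_i}$ — is where the real work will be. The only subtlety I would flag in writing up the lemma itself is to record explicitly that each $Y_i$ is $H$-stable, so that the stability argument for $\varphi^{-1}Y_i$ can proceed orbit by orbit.
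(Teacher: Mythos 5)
Your proof is correct and follows essentially the same route as the paper's: use continuity of $\varphi$ to get locally closed preimages partitioning $X$, deduce $G$-stability of each $\varphi^{-1}Y_i$ from $H$-stability of $Y_i$, and identify the non-empty fibres of $\pi_i\circ\varphi$ with $G$-orbits via the hypothesis on $\varphi$. The only difference is that you spell out the last two verifications (decomposing $Y_i$ into $H$-orbits for the stability claim, and identifying $\varphi^{-1}(\pi_i^{-1}(z))$ for the fibre claim) which the paper dismisses with ``one verifies easily''.
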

\begin{proof} Since the morphism $\varphi$ is a continuous map the inverse image of any locally closed set is locally closed.  It follows that the non-empty $\varphi^{-1}Y_i$ are a disjoint decomposition of $X$ into locally closed sets which will not be irreducible in general even if the $Y_i$ are so.  Each $\varphi^{-1}Y_i$ is stable under the action of G since $Y_i$ is stable under the action of $H$.  One verifies easily that one gets a separating stratification.
\end{proof}

For any  natural number $N$ and a finite sequence of natural numbers $l_j,m_{j}$ with $j\in [N]:=\{1,\ldots,N\}\subset \N$  we introduce the variety 
$Y(\bl,\bm)=\prod_{j=1}^{N} k^{l_{j}\times m_j}$ endowed with the obvious action of  $H=\prod_{j=1}^{N} GL_{l_{j}}$  from the left on the rows of matrices in $\prod_{j=1}^Nk^{l_j\times m_j}$.  Then $Y$ has a separating stratification based on the Gauss-algorithm as follows from section \ref{subsec:ref}.
 Now we state our main result still without the technicalities involved in the definition of $\varphi$. 
\begin{theorem}\label{thm1} Let $n$ and $p$ be two natural numbers.  Then one can  construct natural numbers $l_j$ and $m_{j}$ for $j\in [n^{2}]$ and a morphism  
\begin{equation}\label{defphi}
\varphi: V(n,p) \rightarrow Y(\bl,\bm), \quad \bl=(l_1,\ldots,l_{n^2}),\bm=(m_1,\ldots,m_{n^2}),
\end{equation}
 such that the preimage of an $H$-orbit is empty or a $GL_{n}$-orbit.
\end{theorem}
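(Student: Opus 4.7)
I will adopt the module-theoretic viewpoint: each $A\in V(n,p)$ turns $k^n$ into a left module over $R=k\langle X_1,\ldots,X_p\rangle$ via $X_i\mapsto A_i$, and two tuples are simultaneously similar iff the corresponding $R$-modules are isomorphic. The plan is to build $\varphi$ so as to encode this module by data on which the $GL_n$-action by conjugation reduces to block-wise left multiplication.

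I would begin by selecting $n^2$ distinguished monomials $w_1,\ldots,w_{n^2}$ in $X_1,\ldots,X_p$ (the count $n^2$ reflects the fact that $k\langle A_1,\ldots,A_p\rangle \subseteq \End(k^n)$ has $k$-dimension at most $n^2$; by Shirshov's height theorem one may take the $w_j$ of bounded degree depending only on $n$ and $p$). For each $j\in[n^2]$, I would set $l_j=n^2$ and define $\varphi_j\colon V(n,p)\to k^{n^2\times m_j}$ by stacking as columns the vectorizations $\mathrm{vec}(w_j(A)\,u_i(A))\in k^{n^2}$ for $i\in[m_j]$, where $u_1,\ldots,u_{m_j}$ is an auxiliary list of monomials tailored to $w_j$ (whose purpose is explained below).

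Using the standard identity $\mathrm{vec}(gXg^{-1})=(g^{-T}\otimes g)\,\mathrm{vec}(X)$, one checks that $\varphi_j(g\cdot A) = (g^{-T}\otimes g)\,\varphi_j(A)$, with $g^{-T}\otimes g\in GL_{l_j}$. Hence the $GL_n$-action on $A$ is implemented on each component $\varphi_j$ as left multiplication by an element of $GL_{l_j}$, which is exactly the restriction of the $H$-action on $Y(\bl,\bm)$ to this factor. Consequently the image of each $GL_n$-orbit is contained in a single $H$-orbit, giving one direction of the orbit condition for free.

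The hard part will be the converse: from $\mathrm{rowsp}\,\varphi_j(A) = \mathrm{rowsp}\,\varphi_j(A')$ for every $j$ one must deduce $A\sim_{GL_n} A'$. Linear relations among the $w_j(A)$ alone would only recover the subalgebra $k\langle A_1,\ldots,A_p\rangle$, not the $R$-module structure on $k^n$; the auxiliary words $u_i$ are included precisely to record enough multiplicative data to pin down the kernel of the presentation $R_{\leq d}^{\oplus n}\twoheadrightarrow k^n$ (for $d$ the Shirshov bound), which classically determines the $R$-module up to isomorphism. The technical core of the proof lies in this reconstruction, and it is also where both the choices of the $w_j$ and $u_i$ and the sufficiency of $n^2$ components must be pinned down precisely.
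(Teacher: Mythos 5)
Your construction is genuinely different from the paper's, which builds $\varphi$ from the $r$-minors of the Sylvester-type matrix $\rL(\bA,\bY)$ (viewed as polynomials in a symbolic tuple $\bY$) and proves the converse direction via a representation-theoretic fact (Theorem~\ref{thm3} and Corollary~\ref{cor1}) stating that the dimensions $[M_\bA,M_\bY]=\dim\operatorname{Hom}(M_\bA,M_\bY)$ over all $n$-dimensional $M_\bY$ determine $M_\bA$ up to isomorphism. Unfortunately, your construction cannot be completed, and the gap is not merely the omitted ``technical core'': the invariant you extract is too coarse in principle.

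Here is the problem. Since $\varphi_j(A)\in k^{n^2\times m_j}$ and the row space of a matrix is the orthogonal complement of its right null space, knowing $\mathrm{rowsp}\,\varphi_j(A)$ is exactly the same as knowing which linear relations $\sum_i c_i\,\mathrm{vec}\bigl(w_j(A)u_i(A)\bigr)=0$ hold. But such a relation is precisely the matrix identity $\sum_i c_i\, w_j(A)u_i(A)=0$ in $k^{n\times n}$, i.e.\ an element of the kernel of the algebra map $k\langle X_1,\ldots,X_p\rangle\to k^{n\times n}$, $X_i\mapsto A_i$. That is an invariant of the subalgebra $k\langle A_1,\ldots,A_p\rangle$, not of the module $k^n$; it is not the kernel of the presentation $R^{\oplus n}\twoheadrightarrow k^n$ that you intend to reconstruct. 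Concretely, take $p=1$, $n=3$, $A=\diag(1,0,0)$, $A'=\diag(1,1,0)$. Both satisfy exactly the relations generated by $X^2-X$, so for any choice of words (necessarily powers of the single variable) the matrices $\varphi_j(A)$ and $\varphi_j(A')$ have the same null space and hence the same row space; they therefore lie in the same $H$-orbit, yet $A\not\sim A'$. To capture module data one would have to work with the vectors $w(A)e_i\in k^n$ rather than the matrices $w_j(A)u_i(A)$, but then the $GL_n$-action no longer intertwines with left multiplication by a fixed $P\in GL_{l_j}$, and the ``free'' equivariance you derived is lost. The paper sidesteps this by encoding the function $\bY\mapsto\rank\rL(\bA,\bY)=n^2-[M_\bA,M_\bY]$ through the polynomial coefficients of the $r$-minors of $\rL(\bA,\bY)$, and then invoking Theorem~\ref{thm3}/Corollary~\ref{cor1} to certify that this really is a complete module invariant.
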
 

Thus, we obtain a separating stratification for the complicated action by conjugation from the almost trivial action by left-multiplication. The drawbacks are that the target variety $Y$ is very big and growing fast with $n$ and $p$, and that we do not know the image of $\varphi$. In particular there will be many  strata $Y_{i}$ of $Y$ with empty preimage.

Now we describe shortly what is done in the article.  In section \ref{sec:geo1m} we recall in the language of $G$-varieties the results on the reduced row echelon form and the rational normal form.  In section \ref{sec:repth}  we give a  result in representation theory which is essential in proving the main result of our paper, that is stated in the central section \ref{sec:prfthm1}.    In this section we give a complete set of invariants for simultaneous similarity and prove Theorem \ref{thm1}.
We apply this result in section \ref{sec:oa} to obtain a separating stratification for general varieties of modules, including tuples of $m\times n$- matrices under the left-right multiplication of $GL_{m}\times GL_{n}$.
The final section \ref{sec: crr} makes some comments on related articles  and problems.

Implicitly, the main result is already  published by the authors in the  articles \cite{F1,F2} from 1983 and \cite{B3} from 1995. The first article of the second author constructs  a stratification into pieces with invariant functions which separate the orbits only up to finitely many choices. This construction was then refined to a separating stratification by the first author  after observing our Theorem \ref{thm3}.  However the third article went unnoticed even by the second author. 

To remedy this situation we wrote together this article which has a strong didactical flavour. To make it easier to  read for any mathematician we  have already explained above the simple and beautiful idea of the second author  before delving into technicalities. To make it easier to read for people from matrix theory we give in subsections \ref{subsec:geonf} and \ref{subsec:condef} some definitions and facts from elementary algebraic geometry as presented in the first two chapters of \cite{Hu95}.   In subsections \ref{subsec:ref} and \ref{subsec:rnf} we formulate  two well-known results about matrices in that geometric language.  For representation theorists  we list in subsection \ref{subsec:prelm} some basic facts from matrix theory like the Cauchy-Binet formula and the use of Kronecker products for matrix equations as given in \cite{Fribk}. 
\section{ Some geometry and the case of one matrix}\label{sec:geo1m}
\subsection{Geometric quotients and normal forms}\label{subsec:geonf}
Let $X$ be a $G$-variety and $\pi:X \rightarrow Y$ be a morphism with the orbits as non-empty fibres. Then one 
finds a subvariety $Y'$ of $Y$ such that the induced morphism $\varphi:X'=\pi^{-1}Y'  \rightarrow Y'$ is even a geometric quotient ( see \cite[5.3]{B4}   ).   This means that $\varphi$ maps open sets to open sets, has the orbits as fibres and it induces for each open subset $U$ of $Y'$ an isomorphism $\mathcal{O}_{X'}( \varphi^{-1}(U))^{G} \simeq \mathcal{O}_{Y'}(U) $. We say that a geometric quotient $\varphi:X \rightarrow Y$  admits   normal  forms if there is a constructible subset $N$ of $X$ hitting each orbit in exactly one point.

In subsections  \ref{subsec:ref} and \ref{subsec:rnf}  we present for the two classical actions of linear algebra separating stratifications $\pi_{i}:X_{i}\rightarrow Y_{i}$  which are geometric quotients admitting affine subspaces as normal forms.  So here the situation is very nice. 
In  section \ref{subsec:wild} we will mention bad behaviour for pairs of $2\times 2$ and $3\times 3$-matrices. 

\subsection{Some conventions, definitions, facts}\label{subsec:condef} 
Recall that in this article $k$ is an arbitrary field. 
We refer to the first pages of Humphreys book \cite{Hu95} for all what follows in this subsection. 
A subset of a topological space is locally closed if it is the intersection of an open and a closed subset and irreducible if it is not the union of two proper closed subsets. On $k^{n}$ we have the Zariski-topology where the closed sets are defined by the vanishing of a certain set of polynomials in $k[X_{1},X_{2},\ldots ,X_{n}]$. Such a set is an affine variety. An open subset of an affine variety is a quasi-affine variety and these are the only varieties we are dealing with from now on. Any locally closed subset of a variety is itself a variety.
A finite union of locally closed subsets is called constructible.
 
Morphisms between two varieties are functions that can locally  be  defined as the quotient of two polynomials where the denominator  does not vanish.  An isomorphism is a bijective morphism where the inverse is also a morphism. 

Any morphism $f:X \rightarrow Y$ is continuous for the Zariski-topologies. It follows that the image of an irreducible set is irreducible and the preimage of a locally closed set again locally closed.  A  variety $X$ is smooth if it is irreducible and if the dimension of the variety coincides  with the dimension of the tangent space $T_{x}X$ in each point $x$, and X is rational if an open dense subset of $X$ is isomorphic to an open subset of some $k^{n}$.

The product $X_{1}\times X_{2}$ of two varieties is just the set-theoretic product and this product is irreducible, resp. rational, resp. smooth if both factors are so. The product of two locally closed subsets $L_{i}$ in $X_{i}$ is again locally closed. 

An algebraic group $G$ is an algebraic variety endowed with a group structure such that the multiplication and the inversion are morphisms of algebraic variety. A $G$-variety is a variety $X$ equipped with an action $G\times X \rightarrow X$ which is a morphism.  
The orbit of each $x \in X$,  denoted by orb$(x)=\{y\in X: y=gx, g\in G\}$,  is then a locally closed subset. We consider only $G=GL_{n}$ or some products of such groups acting on varieties $X$ of matrices or products thereof with action by conjugation or by left multiplication.

\subsection{Reduced row echelon forms}\label{subsec:ref}
First we treat the action of $G=GL_{l}$ on $X= k^{l\times m}$ by left multiplication. Then the orbit of $A$ contains always exactly one reduced echelon matrix $E(A)$ and Gauss-elimination gives an effective algorithm to compute it. Since some divisions are necessary in this process  the map $A \mapsto E(A)$ is not a morphism, but its  restriction to appropriate locally closed subsets is.

Any subset $F$ of $[\min(l,m)]$ has cardinality $r \leq \min(l,m)$ and ordering  its elements $f(1)<f(2)< \ldots <f(r)$ we get a strictly increasing function $f:[r] \rightarrow [\min(l,m)]$.
The set$X_{F}=k^{l\times m }_{F}$ consists in those matrices $A$ with rank $r$ where  $f(i)$ is the smallest number $j$ such that the span of the first $j$ columns has dimension $i$. This set is stable under $G$ and it is locally closed because it is defined by the vanishing resp. non-vanishing of appropriate minors of $A$. Furthermore $X$ is the disjoint union of the various $ X_{F}$. Here $X_{\emptyset}$ contains the $l\times m$ zero-matrix.

For each $F$ as above  with associated function $f$ the set $E_{F}$ contains all matrices $A \in X_{F}$ such that only the first $r$ rows are not $0$, the first non-zero entry in the $i^{th}$ row is $A_{if(i)}=1$ and all other entries in the column $f(i)$  are $0$. So $E_{F}$ is just an affine subspace of $k^{l\times m}$ having the matrix with exactly $r$ non-zero entries as a base point. The matrices in $E_{F}$ are called reduced echelon forms. The zero-matrix is the reduced echelon form in $X_{\emptyset}$. 
 \begin{proposition}\label{prop1} Each $  X_{F}$ is an irreducible smooth rational variety. The map $A \mapsto E(A)$ is the geometric quotient  
 and it admits the  affine subspace $E_{F}$ as normal forms.
 \end{proposition}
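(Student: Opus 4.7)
The plan is to exhibit an isomorphism $X_F \cong Y_r \times E_F$, where $Y_r \subset k^{l \times r}$ is the open subvariety of $l \times r$ matrices of full column rank $r$; from this identification every assertion of the proposition follows at once. Writing $\tilde E$ for the top $r \times m$ block of $E \in E_F$ (the last $l-r$ rows of $E$ vanish by definition), I would introduce the morphism
\[
\mu : Y_r \times E_F \longrightarrow X_F, \qquad (G, E) \longmapsto G\tilde E.
\]
The first check is that $G\tilde E$ really lies in $X_F$ with pivot pattern $F$: column $f(i)$ of $\tilde E$ is the $i$-th standard basis vector of $k^r$, so $(G\tilde E)_{*, f(i)} = G_{*, i}$, and the echelon condition forces every column of $\tilde E$ whose index $j$ satisfies $f(i-1) < j < f(i)$ to lie in the span of the first $i-1$ standard basis vectors; combined with $G$ having full column rank, this pins down the pivot profile.

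Next I would construct the set-theoretic inverse $A \mapsto (A_{*, F}, E(A))$. The factorization $A = A_{*, F}\, \tilde E(A)$, obtained by expanding each column of $A$ in the basis of pivot columns, handles surjectivity; injectivity of $\mu$ is immediate from $G(\tilde E - \tilde E') = 0$ together with $\ker G = 0$. The one genuine step, and the main obstacle in my view, is showing that $A \mapsto E(A)$ is regular. For each $r$-subset $J \subseteq [l]$ set $U_J = \{A \in X_F : \det A_{J, F} \neq 0\}$; on $U_J$ the relation $A_{J, *} = A_{J, F}\, \tilde E(A)$ gives the explicit formula $\tilde E(A) = A_{J, F}^{-1} A_{J, *}$, which is rational in the entries of $A$ with denominator non-vanishing on $U_J$. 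Since $A_{*, F}$ always contains some invertible $r \times r$ submatrix, the sets $U_J$ cover $X_F$, and as regularity is local, $A \mapsto E(A)$ is a morphism on all of $X_F$. Hence $\mu$ is an isomorphism of varieties.

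With $X_F \cong Y_r \times E_F$ in hand, the remaining claims are formal. The factor $Y_r$ is a non-empty open subvariety of $k^{l \times r}$ (complement of the common zero locus of all $r \times r$ minors), hence smooth, irreducible and rational; $E_F$ is an affine subspace, isomorphic to some $k^d$; so their product is smooth, irreducible and rational. Under the identification, $\pi : A \mapsto E(A)$ is the second projection, and the $GL_l$-action transports to $g \cdot (G, E) = (gG, E)$, which is transitive in the first factor because any full column rank $l \times r$ matrix can be completed to an element of $GL_l$. Hence the fibres of $\pi$ are exactly the $GL_l$-orbits, $\pi$ is open as a projection, and $\mathcal{O}_{X_F}(\pi^{-1}(U))^{GL_l} = \mathcal{O}_{E_F}(U)$ for every open $U \subseteq E_F$, so $\pi$ is a geometric quotient. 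Finally, $\mu\bigl(\binom{I_r}{0}, E\bigr) = E$ shows that every orbit meets $E_F$ in the single point $E$, so $E_F$ is a variety of normal forms.
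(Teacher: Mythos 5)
Your proof is correct, and it takes a genuinely different route from the paper's. The paper proceeds inductively, following the Gauss algorithm one pivot at a time: it works on the local chart $U=\{A : A_{1f(1)}\neq 0\}$, constructs an explicit isomorphism $h:U\rightarrow k^{l-1}\times Y$ realizing one elimination step, passes to the submatrix problem in lower dimension, and then covers $X_F$ by $G$-translates of $U$ to globalize; irreducibility is deduced separately as the image of $G\times E_F$. You instead produce in one stroke a global isomorphism $X_F\cong Y_r\times E_F$ via the factorization $A=A_{*,F}\,\tilde E(A)$, with regularity of $E$ coming from the local formula $\tilde E(A)=A_{J,F}^{-1}A_{J,*}$ on the cover $\{U_J\}$. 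The structural advantage of your approach is that smoothness, irreducibility, rationality, the identification of fibres with orbits, and the geometric-quotient property all fall out simultaneously from the product decomposition, rather than being assembled from an inductive local analysis; the price is that you need the (elementary but essential) factorization identity up front, whereas the paper's proof hews more closely to the way one actually runs Gaussian elimination. Both arguments use the section $E_F\hookrightarrow X_F$ to conclude that $\pi$ is a geometric quotient. One small remark: your injectivity sentence ``$G(\tilde E-\tilde E')=0$ together with $\ker G=0$'' presupposes $G=G'$; the cleaner statement, which you essentially make earlier, is that $G$ is recovered as $(G\tilde E)_{*,F}$ because the pivot columns of $\tilde E$ are standard basis vectors, after which $\tilde E$ is determined by left-invertibility of $G$.
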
  
 \begin{proof}We just follow the steps taken in the Gauss algorithm. To see that $X_{F}$ is smooth and rational and that $E$ is a morphism we consider the open subset $U$ of   all $A$ with $A_{1f(1)}\neq 0$. Here we can forget about the first $f(1) -1$ columns and suppose $f(1)=1$. Denote by $Y$ the subset of $U$ containing all $B$ with $B_{ij}=0$ if $j<f(2)$ and  $i\geq 2$. Then the Gauss-algorithm gives a morphism $h:U \rightarrow k^{l-1}\times Y$ mapping a matrix $A \in U$ to the tuple $(A_{21},A_{31},\ldots,A_{l1},B)$. Here $B$ is defined by $B_{1\bullet}=A_{1\bullet}$ and $B_{i\bullet}=A_{i\bullet}-\frac{A_{i1}}{A_{11}}A_{1\bullet }$.  The submatrix $M$ of $A$ formed by the first $f(2)-1$ columns has rank 1 and so all rows of $M$ are proportional to the first row. Looking at the first column of $B$ one sees that it belongs to $Y$. The inverse of  $h$ is also a  morphism. 
 All the matrices  $B'$ consisting of  the last $l-1$ rows of $B$ form some $X'_{F'}$ in $X'=k^{(l-1)\times (m-f(2)+1)}$ which is a smooth rational variety by induction on $r$. The same holds for $U$ which is isomorphic to the product $k^{l-1}\times k^{\ast}\times k^{m-1}\times X'_{F'}$.
 
 Again by induction the map $B' \mapsto E(B')$ is a morphism and $E(B')$ determines the last $l-1$ rows of $E(B)$. By adding appropriate multiples of these rows to $B_{1\bullet} $ and multiplying it with $A_{11}^{-1}$ we obtain $E(B)=E(A)$. We have shown that $E$ is a morphism on $U$. But the $G$-translates of $U$ cover $X_{F}$ and $E$ is $G$-invariant whence a morphism  in an open neighborhood of each point. Furthermore $X_{F}$ is smooth.
 
 Now $E:X_{F}\rightarrow E_{F}$ is a morphism with the orbits as fibres and the inclusion $i:E_{F} \rightarrow X_{F}$ satisfies $\pi \circ  i=id$. This implies that $\pi$ is the geometric quotient. Finally $X_{F}$ is irreducible as the image of the irreducible variety $G\times E_{F}$.\end{proof}
 
 We have described a separating stratification $\pi_{F}:X_{F} \rightarrow E_{F}$ with affine spaces  $E_{F}$ as quotients. These appear also as 
parts of the following more global construction using  Grassmannians. For any $r \leq min(l,m)$ one looks at the locally closed subvariety $X(r)$ formed by all matrices $A\in k^{l\times m}$ of rank r.  The space $Row(A)$ spanned by the rows of $A$ is then a subpace of $k^{m}$ of dimension $r$  and the map $A \mapsto Row(A)$ is the geometric quotient $\pi:X(r) \rightarrow Gr(r,m)$ with the Grassmannian of $r$-dimensional subspaces in $k^{m}$ as the quotient. The affine spaces $E_{F}$  to the sets $F$ with $r$ elements are isomorphic to the Schubert-cells of $Gr(r,m)$ and the $\pi_{F}$ are  pull-backs of $\pi$. In the following we stick to the affine pieces, but we will use the fact that two matrices $A$ and $B$ lie in the same orbit iff $Row(A)=Row(B)$ iff $E(A)=E(B)$.
\subsection{Rational normal forms}\label{subsec:rnf}
We  now look at the action of $G=GL_{n}$ on $X=k^{n \times n}$ by similarity.  Again we find a separating stratification consisting of irreducible rational smooth subvarieties having geometric quotients that admit normal forms living in affine subspaces. But this time the proofs are much harder and we refer to the literature. 
 
A rational normal form $R \in  k^{n\times n}$, also known as the Frobenius normal form,  is a block-diagonal matrix
 \begin{equation}\label{RNF}
\left [
\begin{array}{cccc}
B(P_{1})&0& ...&0\\
0&B(P_{2})& ... & 0\\
...&...&...&...\\
0&0&...&B(P_{r})
\end{array}
\right ]
\end{equation}
with companion-matrices $B(P_{1}),B(P_{2}),\ldots ,B(P_{r})$ on the main diagonal, such that all polynomials $P_{i}$ are normalized with coefficients in $k$ and  $P_{i+1}$ divides 
 $P_i$ for all $i$ with  $1\leq i \leq r-1$. We write $R=R(P_{1},P_{2},\ldots ,P_{r})$ for such a matrix and call it an RNF.

The main result about the RNF says: Each matrix $A \in k^{n\times n}$ is similar  to exactly one rational normal form $R=R(A)=R(P_{1},P_{2},\ldots ,P_{r})$ which can be obtained from the coefficients of $A$ by finitely many rational operations.

 Again the map $A \mapsto R(A)$ is not a morphism but its restriction to appropriate subvarieties given by partitions of $n$ is. To each partition $p=(p_{1},p_{2},   \ldots ,p_{r})$ we collect all rational normal forms $R=R(P_{1},P_{2},\ldots ,P_{r})$ with $P_{i}$ of degree $p_{i}$  for all $i$ to the subset $R(p)$ and  all $A$ with $R(A)$ in $R(p)$ to $S(p)$.  These $S(p)$ are just the sheets for the considered action i.e. the irreducible components of the finitely many locally closed subsets where the orbit dimension is the same (\cite{Bo}). 
 
 \begin{theorem}\label{thm2}Each $ S(p)$ is an irreducible rational smooth subvariety of $k^{n\times n}$. The map $A \mapsto R(A)$ is the geometric quotient 
 $R:S(p) \rightarrow R(p)$.  Taking a slight variant of companion matrices on gets even an affine subspace $A(p)$ as normal forms.
 
 \end{theorem}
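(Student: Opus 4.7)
The plan is to parallel the argument for Proposition~\ref{prop1}, with the Smith normal form / invariant factor decomposition of $k[x]$-modules replacing Gauss elimination. The key dictionary is that $A\in k^{n\times n}$ corresponds to the $k[x]$-module $M_A:=(k^n,\ x\cdot v=Av)$; similarity classes of matrices correspond to isomorphism classes of such modules; and the rational normal form $R(A)=R(P_1,P_2,\ldots,P_r)$ records the invariant factors of $M_A$ via the structure theorem for finitely generated modules over the PID $k[x]$. In this language, $S(p)$ is precisely the set of matrices whose invariant factors have degrees $p_1,\ldots,p_r$.

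The first step is to interpret the stratification geometrically: the $S(p)$ can be identified with the sheets of the conjugation action, i.e.\ the irreducible components of the locally closed subsets on which the orbit dimension is constant. This identification, and with it the locally closed structure and irreducibility of $S(p)$, is established in \cite{Bo}, which we would cite rather than reprove.

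For the morphism $R:S(p)\rightarrow R(p)$, the plan is to cover $S(p)$ by $GL_n$-translates of an affine-open piece $U$ on which a Smith-normal-form algorithm applied to $xI-A$ over $k[x]$ admits all its pivots; this exactly mirrors the way the proof of Proposition~\ref{prop1} covered $X_F$ by translates of the locus $\{A_{1f(1)}\neq 0\}$. On $U$ the pivoting conditions are explicit non-vanishing minors in the entries of $A$, and the coefficients of the resulting polynomials $P_i(A)$ depend rationally on $A$ with denominators non-vanishing on $U$; thus $R|_U$ is a morphism, and $GL_n$-invariance propagates this to a global morphism on $S(p)$. For the normal forms one takes a slight variant of the block-companion shape: in addition to the block-diagonal companion matrices of $P_1,\ldots,P_r$, one allows free parameters in specific off-diagonal cells dictated by $p$, chosen so that the restriction of $R$ to this affine subspace $A(p)\subset S(p)$ is an isomorphism onto $R(p)$. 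This produces a section $A(p)\hookrightarrow S(p)\to R(p)$, which both forces the fibres of $R$ to be orbits and identifies $R$ as the geometric quotient, exactly as in the last paragraph of the proof of Proposition~\ref{prop1}. Smoothness and rationality of $S(p)$ then follow from its description as a $GL_n$-sweep of the smooth rational slice $A(p)$.

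The main obstacle is twofold. First, showing that the invariant-factor computation is a genuine morphism on the open cover of $S(p)$ requires careful bookkeeping of the Smith-normal-form reduction to ensure that divisions by zero never occur on $S(p)\cap U$; this is more delicate than the single pivot-choice in Gauss elimination, because several pivots must interact consistently across the divisibility relations $P_{i+1}\mid P_i$. Second, producing the explicit affine subspace $A(p)$ requires a specific affine slice transverse to the orbits of the block-diagonal conjugations that preserve each companion block. Both of these delicate points are handled in the references pointed to in the passage preceding the theorem, and our role is to verify that they combine into a separating stratification in the sense of Section~\ref{sec:intro}.
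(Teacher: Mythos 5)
The paper does not actually prove Theorem~\ref{thm2}: immediately after the statement it writes ``The proof is given in \cite{B1,B7}'' and moves on, so there is no in-paper argument for you to match. Read as a reconstruction of what those references do, your outline is reasonable in spirit --- the $k[x]$-module and invariant-factor dictionary, the identification of the $S(p)$ with the sheets of the conjugation action via \cite{Bo}, covering $S(p)$ by $GL_{n}$-translates of an open piece $U$ on which the normal form depends rationally on $A$, and exhibiting an affine section $A(p)$ that both forces the fibres of $R$ to be orbits and identifies $R$ as the geometric quotient, exactly as in the last paragraph of the proof of Proposition~\ref{prop1}.

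There is, however, a real gap in your final step: you assert that ``smoothness and rationality of $S(p)$ then follow from its description as a $GL_{n}$-sweep of the smooth rational slice $A(p)$.'' Neither follows. The image of a smooth variety under a morphism need not be smooth, and the image of a rational variety is in general only unirational. In the model proof of Proposition~\ref{prop1} both properties are obtained differently: an explicit local isomorphism $U\cong k^{l-1}\times k^{\ast}\times k^{m-1}\times X'_{F'}$ gives smoothness and rationality of $U$ by induction, and the $G$-translates of $U$ then cover the stratum; only irreducibility is deduced from the sweep. For $S(p)$ the analogous local product decomposition --- roughly, splitting off a largest cyclic $k[A]$-summand together with a complement --- is the substance of \cite{B1}; rationality of the sheets is the title result of that paper precisely because the sweep argument alone does not yield it. Relatedly, the pivoting for Smith normal form over $k[x]$ is governed by degree and divisibility conditions on the invariant factors, not by the non-vanishing of a fixed list of minors of $A$, so even locating the open $U$ on which $R$ is regular is nontrivial and is not a verbatim analogue of the single pivot $A_{1f(1)}\neq 0$. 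Since you ultimately defer these delicate points to the cited references (as the paper itself does), this is less a wrong strategy than an overclaim: as written, the smoothness and rationality assertions do not stand on the argument you give.
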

 
 The proof is given in  \cite{B1,B7}. The existence of $A(p)$ answers a question asked by Kontsevich in  \cite[pages 127 and 613]{Ar}.

\section{A fact from representation theory}\label{sec:repth} 
In what follows we denote by $\cA$ an associative algebra.
We need a test deciding when two $\cA$-modules $M$ and $N$ of finite dimension are isomorphic.  This test is in terms of dimensions of various  homomorphism spaces and we abbreviate with $[X,Y]$ the dimension of $Hom_{\cA}(X,Y)$.

Using the existence of almost split sequences M. Auslander proved in \cite{Aus}
that two modules $M$ and $N$ as above are isomorphic if and only if
$[M,X]=[N,X]$  holds for all finite-dimensional modules $X$.  Here we show
that $M$ and $N$ are already isomorphic if $[M,X]=[N,X]$ holds only for all
submodules of $M$ and of $N$,  and this is essential to derive our main
result.  Our proof  is elementary,  and it extends to $k$-linear abelian
categories with finite dimensional homomorphism spaces, where almost
split sequences do not exist, e.g. , to the category of coherent sheaves
over any projective variety.  We use properties of the radical of the category $mod\,\cA$ of finite-dimensional modules ( see e.g. \cite[appendix A.3]{As} ), the additivity in both variables of the Hom-functor and its exactness properties.

\begin{theorem}\label{thm3} Let $k$ be an arbitrary field and $\cA$ an associative $k$-algebra with two  finite dimensional modules $M$ and $N$. Then the following conditions are equivalent:
\begin{enumerate}[(a)]
\item $M$ and $N$ are isomorphic.
\item $[ M,X]=[N,X]$ holds for each $X$ which is a submodule of $M$ or of $N$.
\item $[X,M]=[X,N]$ hods for each $X$ which is a submodule of $M$ or of $N$.
\end{enumerate}
 
\end{theorem}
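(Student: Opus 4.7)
The two implications $(a)\Rightarrow(b)$ and $(a)\Rightarrow(c)$ are immediate, since an isomorphism $M\cong N$ induces bijections $\text{Hom}_A(M,X)\cong\text{Hom}_A(N,X)$ and $\text{Hom}_A(X,M)\cong\text{Hom}_A(X,N)$ for every $A$-module $X$.

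For the substantive direction $(b)\Rightarrow(a)$, the plan is to induct on $\dim_k M+\dim_k N$. The base case $M=N=0$ is trivial (from (b) with $X=N$ one gets $[N,N]=[M,N]=0$, forcing $N=0$). For the inductive step, each finite-dimensional indecomposable $L$ has a finite-dimensional, hence local, endomorphism ring, so the Krull--Schmidt theorem gives decompositions $M=\bigoplus_L L^{\mu_L(M)}$ and $N=\bigoplus_L L^{\mu_L(N)}$ indexed by isomorphism classes of indecomposables. It suffices to produce an indecomposable $L_0$ that is a common summand, since Krull--Schmidt cancellation then passes condition (b) to strictly smaller modules $M',N'$ (any submodule of $M'$ or $N'$ is automatically a submodule of $M$ or $N$), and the induction closes.

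The heart of the argument is the multiplicity identity coming from the radical of the category $\text{mod}\,A$:
$\text{Hom}_A(X,L)/\mathrm{rad}_A(X,L)\cong D_L^{\mu_L(X)},$
where $D_L=\text{End}_A(L)/\mathrm{rad}\,\text{End}_A(L)$ is a division ring. Fix an indecomposable summand $L$ of $M$; then $L$, together with every proper submodule $L'\subsetneq L$, is a submodule of $M$, so (b) yields $[M,L]=[N,L]$ and $[M,L']=[N,L']$ for every proper $L'\subsetneq L$. Combining these equalities with additivity of Hom and the left-exactness of $\text{Hom}_A(X,-)$ applied to the short exact sequences $0\to L'\to L\to L/L'\to 0$, and recursing on the radical length of $L$, one can express $\dim_k\mathrm{rad}_A(M,L)$ in terms of the dimensions $[M,L']$, and likewise $\dim_k\mathrm{rad}_A(N,L)$ in terms of the $[N,L']$. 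Since all these numbers agree by (b), one concludes $\dim\mathrm{rad}_A(M,L)=\dim\mathrm{rad}_A(N,L)$, hence $\mu_L(M)=\mu_L(N)$, and in particular $L$ is a common summand.

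The main technical obstacle lies in the case where $L$ has a non-simple top: there $\mathrm{rad}_A(X,L)$ strictly contains the image of $\text{Hom}_A(X,\mathrm{rad}\,L)$ in $\text{Hom}_A(X,L)$, because non-split surjections $X\twoheadrightarrow L$ also belong to the radical. One must then track the images in $\text{Hom}_A(X,L/L')$ for all maximal proper submodules $L'\subsetneq L$ simultaneously, and this bookkeeping is precisely what additivity and exactness of Hom, applied iteratively via (b) to the proper submodules of $L$, are designed to handle. The implication $(c)\Rightarrow(a)$ is proved by the entirely parallel argument using the mirror identification $\text{Hom}_A(L,X)/\mathrm{rad}_A(L,X)\cong D_L^{\mu_L(X)}$, now applying (c) to submodules of $M$ and $N$ whose cokernels realize the proper quotients of $L$.
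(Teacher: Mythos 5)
The implications $(a)\Rightarrow(b),(c)$ and the Krull--Schmidt setup of your induction are fine, and you are right that once a common indecomposable summand $L_0$ is found, condition (b) passes to $M',N'$ and the induction closes. But the step that actually carries the theorem --- producing $L_0$ by showing $\mu_L(M)=\mu_L(N)$ --- is not proved; it is described, and the description contains a gap that I do not think can be filled along the lines you sketch.

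The plan is to compute $\dim_k\mathrm{rad}_A(M,L)$ from the numbers $[M,L']$ with $L'\subsetneq L$ a submodule. The obstruction you correctly identify is that $\mathrm{rad}_A(X,L)$ also contains every non-split epimorphism $X\twoheadrightarrow L$, and such a map does not factor through any proper submodule of $L$. Two problems. First, this obstruction is \emph{not} confined to $L$ with non-simple top: already for $L$ simple and $A=k[x]/(x^2)$, with $M=A$, every nonzero map $M\to L$ is a non-split surjection, so $\mathrm{rad}_A(M,L)=\mathrm{Hom}_A(M,L)\neq 0$ while $L$ has no nonzero proper submodule at all, so the proposed recursion on submodules of $L$ has nothing to recurse on. Second, and more fundamentally, the patch you suggest --- ``track the images in $\mathrm{Hom}_A(X,L/L')$'' --- takes you outside the reach of hypothesis (b). Condition (b) only equates $[M,X]$ and $[N,X]$ when $X$ is a \emph{submodule} of $M$ or of $N$; the quotient $L/L'$ need not be such a submodule, so applying $\mathrm{Hom}_A(-,-)$ to $0\to L'\to L\to L/L'\to 0$ produces a third term over which (b) gives you no control. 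The same difficulty afflicts your ``mirror'' sketch for $(c)\Rightarrow(a)$, which, incidentally, the paper explicitly flags as \emph{not} simply dual to $(b)\Rightarrow(a)$.

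For contrast, the paper's proof avoids multiplicities altogether. From a basis $f_1,\dots,f_r$ of $\mathrm{Hom}_A(M,N)$ it forms $f:M^r\to N$ with image $I\subseteq N$, observes that (b) applied to the submodules $N$ and $I$ of $N$ gives $[N,N]=[M,N]=[M,I]=[N,I]$, and concludes that $\mathrm{id}_N$ factors through $I\hookrightarrow N$, i.e.\ $f$ is onto; symmetrically there is an epi $N^s\to M$. If $M$ and $N$ share no indecomposable summand, the composite epi $M^{rs}\to N^s\to M$ has all its matrix entries in the radical, which forces $JM=M$ for the nilpotent radical $J$ of $\mathrm{End}_A(M)$, hence $M=0$ and then $N=0$; the general case is reduced to this by cancelling the largest common summand. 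Your inductive framework is sound, but as written the heart of the argument is missing; you would need either a genuinely different device for detecting a common summand (e.g.\ the surjectivity argument above), or a way of expressing $\dim\mathrm{rad}_A(M,L)$ using only test objects that are submodules of $M$ or $N$, which your sketch does not supply.
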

\begin{proof} That (a) implies (b) is trivial.  For the other direction we take a basis $f_{1},f_{2},\ldots ,f_{r}$ of $Hom_{\cA}(M,N)$ and we consider the homomorphism $f:M^{r} \rightarrow N$ with 
$f(m_{1},m_{2},\ldots ,m_{r})= \sum f_{i}m_{i}$ and its image $I$ in $N$. Any homomorphism $g:M \rightarrow N$ is a linear combination of the $f_{i}$'s and therefore its image
is contained in $I$. Thus the inclusion $\epsilon:I \rightarrow N$ induces an isomorphism $Hom_{\cA}(M,I) \rightarrow Hom_{\cA}(M,N)$. Of course $\epsilon$ induces also an injection $Hom_{\cA}(N,I) \rightarrow Hom_{\cA}(N,N)$ which is bijective because of $[N,N]=[M,N]=[M,I]=[N,I]$. Thus the identity $id_{N}$ factors through $\epsilon$ and so $f$  is surjective. Symmetrically we find also a surjection $g:N^{s} \rightarrow M$.

Consider  decompositions $M=\bigoplus_{i=1}^{p} U_{i}$ and  $N=\bigoplus_{j=1}^{q} V_{j}$  into indecomposables. Suppose first that none of the $U_{i}$ is isomorphic to one of the $V_{j}$. We have an  epimorphism $h:M^{rs} \rightarrow N^{s} \rightarrow M$ which we can write as a product of huge  matrices of homomorphisms between non-isomorphic indecomposables so that they all belong to the radical of the category $mod\,\cA$.  Thus $h$ shows   that $JM=M$ where $J$ is the usual nilpotent Jacobson-radical of the finite-dimensional algebra $End\,M$. It follows that $M=0$ whence $0=[M,M]=[N,N]$ and $N=0$.

In the general case we have  $M=K\oplus M'$ and $N=L\oplus N'$  in such a way that $K$ and $L$ are isomorphic whereas no indecomposable direct summand of $M'$ is isomorphic to one of $N'$. Any $X'$ which is a submodule of $M'$ or of $N'$ is a fortiori a submodule of $M$ or of $N$. Thus we have 
$[K,X']+[M',X']=[M,X']=[N,X']=[L,X']+[N',X']$  whence $[M',X']=[N',X']$. By the first case $M'=N'=0$ and so $M$ and $L$ are isomorphic.

It remains to show that (c) implies (a). This is not dual to (b) implies (a). Nevertheless we  look at the transposed morphism  $g:M \rightarrow N^{r}$ with components the $f_{i}$ from the proof above. For its kernel $K$ one has $0=[K,N]=[K,M]$ and so $g$ is a mono. Symmetrically one obtains a mono $M \rightarrow N^{s}$ and then one applies the arguments dual to the ones used in the proof above. 
\end{proof}

We consider later on  the free associative algebra ${\mathcal A}=k\langle X_{1},X_{2},\ldots ,X_{p}\rangle$ in $p$ non-commuting variables. Any $p$-tuple ${\bf A}=(A_{1},A_{2},\ldots ,A_{p})$ of $n\times n$-matrices defines the $n$-dimensional module $ M_{\bf A}$ where $X_{i}$ acts through $A_{i}$. A homomorphism from $ M_{\bf A}$  to   $ M_{\bf B}$ is a matrix $Z \in k^{n\times n}$ with $ZA_{i}=B_{i}Z$ for all $i$. Therefore
 $ M_{\bf A}$  and   $ M_{\bf B}$ are isomorphic iff the two tuples $\bf A$ and $\bf B$ are simultaneous similar. Any $n$- dimensional module over  $\mathcal{A}$ is isomorphic to some $M_{\bf A}$.

\begin{corollary}\label{cor1} For ${\mathcal A}$ two modules $M$ and $N$ of dimension $n$ are isomorphic iff $[M,X]=[N,X]$ holds for all modules of dimension $n$.
\end{corollary}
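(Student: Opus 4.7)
The forward direction is trivial, so the task is to derive condition (b) of Theorem \ref{thm3} from the assumption that $[M,X]=[N,X]$ for every $\cA$-module $X$ of dimension exactly $n$. The plan is to exhibit, for each submodule $X$ of $M$ or of $N$, an $n$-dimensional module $Y$ containing $X$ as a direct summand in a controlled way, so that the hypothesis at $Y$ (and at one auxiliary $n$-dimensional module) forces equality at $X$.

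Given such a submodule $X$, note that $\dim_{k}X\leq n$. Set $s=n-\dim_{k}X$ and let $T_{s}$ denote the $s$-dimensional $\cA$-module on which every generator $X_{i}$ acts as zero; since $\cA$ is free, any linear action of the $X_{i}$ defines a module, so $T_{s}$ exists. Form $Y=X\oplus T_{s}$, which is an $\cA$-module of dimension exactly $n$. By additivity of $\Hom_{\cA}$ in the second variable,
\[
[M,Y]=[M,X]+[M,T_{s}],\qquad [N,Y]=[N,X]+[N,T_{s}],
\]
and the hypothesis yields $[M,Y]=[N,Y]$. Thus it suffices to verify $[M,T_{s}]=[N,T_{s}]$.

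For this, observe that any $\cA$-homomorphism $f\colon M\to T_{s}$ satisfies $f(X_{i}m)=X_{i}f(m)=0$, hence factors through the top $M/JM$, where $J=\langle X_{1},\ldots,X_{p}\rangle$ is the augmentation ideal of $\cA$. Since $T_{s}$ is a trivial module, conversely every $k$-linear map $M/JM\to T_{s}$ lifts to an $\cA$-homomorphism, so $[M,T_{s}]=s\cdot\dim_{k}(M/JM)$, and likewise $[N,T_{s}]=s\cdot\dim_{k}(N/JN)$. Applying the hypothesis to the single $n$-dimensional trivial module $T_{n}$ gives $n\dim_{k}(M/JM)=[M,T_{n}]=[N,T_{n}]=n\dim_{k}(N/JN)$, whence $\dim_{k}(M/JM)=\dim_{k}(N/JN)$ and therefore $[M,T_{s}]=[N,T_{s}]$ for every $s\geq 0$. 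Substituting back yields $[M,X]=[N,X]$, so condition (b) of Theorem \ref{thm3} holds and $M\cong N$.

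The only step requiring a small idea is the padding trick $X\mapsto X\oplus T_{n-\dim X}$; everything else is additivity of $\Hom$ and Theorem \ref{thm3}. The one fact one must not overlook is the need to compare the tops $M/JM$ and $N/JN$, which is why the auxiliary module $T_{n}$ must also be plugged into the hypothesis.
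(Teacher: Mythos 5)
Your proof is correct and uses the same padding trick as the paper: extend a submodule $X$ of dimension $d$ to the $n$-dimensional module $X\oplus S^{n-d}$ (with $S$ the one-dimensional trivial module, your $T_1$) and compare Hom-dimensions via additivity. The detour through $M/JM$ to compute $[M,T_s]$ explicitly is unnecessary---the paper simply uses additivity $[M,S^n]=n[M,S]$ to get $[M,S]=[N,S]$ from the hypothesis at $S^n$---but it is correct and does not change the structure of the argument.
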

\begin{proof} Let $S$ be the one-dimensional module where all $X_{i}$ act as $0$. Then we have $[M,S^{n}]=[N,S^{n}]$ whence $[M,S]=[N,S]$.
   Given a submodule $X$ of $M$ or $N$ of dimension $d$ we get   $[M,X\oplus S^{n-d}]=[N,X\oplus S^{n-d}]$ and so $[M,X]=[N,X]$. By the theorem $M$ and $N$ are isomorphic.
\end{proof}

 \section{The definition of $\varphi$ and the proof of Theorem \ref{thm1}}\label{sec:prfthm1}
\subsection{Preliminary results from matrix theory}\label{subsec:prelm}
In this subsection we recall some well known results in matrix theory that we will use, see \cite{Fribk}.    Let $A=[a_{ij}]\in k^{m\times n}$, where $a_{ij}$ are the entries of $A$.  Denote by $I_n\in k^{n\times n}$ the identity matrix,  and $A^\top\in k^{n\times m}$ the transpose of $A$.  Let $[n]_l$ be the set of all subsets of $[n]$ of cardinality $l$.  Thus, $\alpha\in[n]_l$ is of the form $\alpha=(\alpha_1,\ldots,\alpha_l), 1\le \alpha_1<\cdots<\alpha_l\le n$.  Assume that $l\in[\min(m,n)]$ and $\alpha\in[m]_l,\beta\in[n]_l$.  Then $A[\alpha,\beta]=[a_{\alpha_i\beta_j}]\in k^{l\times l}$ is a submatrix of $A$ of order $l$ formed by the rows $\alpha$ and columns $\beta$,  and $\det A[\alpha,\beta]$ is an $l$-minor 
of $A$.  Let $B\in k^{n\times q}$, and $C=AB\in k^{m\times q}$.  Recall that $\rank C\le \min(m,n,q)$.  Hence any $r$-minor of $C$ is zero for $r>\min(m,n,q)$.     The Cauchy-Binet formula is the following identity for an $r$-minor of $C$, where $r\in [\min(m,n,q)]$  \cite[Propostion 5.2.7]{Fribk}:
\begin{equation}\label{CB}
\det C[\alpha,\beta]=\sum_{\gamma\in [n]_r} \det A[\alpha,\gamma]\det B[\gamma, \beta], \quad \alpha\in [m]_r, \beta\in[q]_r.
\end{equation}

Assume now that $B=[b_{qr}]\in k^{s\times t}$.  The Kronecker tensor product $C=A\otimes B$ is an $(ms)\times (nt)$ matrix whose entries are $c_{(i,q)(j,r)}=a_{ij}b_{qr}$ arranged in a lexicographical order for a row $(i,q)$ and a column $(j,r)$.  It is represented as the following block matrix \cite[Definition 5.1.7]{Fribk}:
\begin{equation}\label{Kropr}
A\otimes B=[a_{ij}B]:=\begin{bmatrix}a_{11}B&\cdots&a_{1n}B\\\vdots&\vdots&\vdots&\\a_{m1}B&\cdots&a_{mn}B\end{bmatrix}.
\end{equation}
That is, the Kronecker tensor product is a representation of $k^{m\times n}\otimes k^{s\times t}$ as $k^{(ms)\times (nt)}$.  
Next, we  represent the matrix multiplications $X\mapsto BX$ and $X\mapsto XA$,  where $X$ varies in $k^{n\times n}$ and $A,B$ are fixed matrices in $k^{n\times n}$.
Write $X$ as $[\x_1\,\x_2\cdots\x_n]$, where $\x_1,\ldots,\x_n$ are the columns of $X$.  View $X$ as a vector $\hat X=\begin{bmatrix}\x_1\\\vdots\\\x_n\end{bmatrix}\in k^{n^2}$.    It is quite straightforward to see $X\mapsto BX$ is given by $\hat X\mapsto (I_n\otimes B)\hat X$.   Slightly more work is needed to show that $X\mapsto XA$ is given by 
$\hat X\mapsto (A^\top\otimes  I_n)\hat X$.  Finally it is straightforward to show using \eqref{Kropr} that 
\begin{equation}\label{tpf}
(A\otimes B)(C\otimes D)=(AC)\otimes (BD).
\end{equation}
 \subsection{The matrices $\rL(\bA,\bC)$}\label{subsec:LAC}
 We view $\bA=(A_1,\ldots,A_p)\in V(n,p)$ as an $pn\times n$ matrix: $\begin{bmatrix}A_1\\\vdots\\A_p\end{bmatrix}$.
For $\bA,\bB\in V(n,p)$ we denote $\bA\approx \bB$ if $\bA$ and $\bB$ are (simultaneously) similar, i.e.  $\bB=T\bA T^{-1}, T\in GL_n$.
For a given  $\bA,\bC\in V(n,p)$  we associate a linear transformation \cite[Section 1]{F1}:
\begin{equation}\label{defLAB}
\begin{aligned}
&\cL(A,C):k^{n\times n}\to k^{n\times n} \quad  \cL(A,C)(Z)=ZA-CZ,\quad A,C,Z\in k^{n\times n},\\
&\cL(\bA,\bC): k^{n\times n}\to (k^{n\times n})^p:  \cL(\bA,\bC)(Z)=
(ZA_1-ZC_1,\ldots,ZA_p-C_p Z)=Z\bA-\bC Z.
\end{aligned}
\end{equation}
The matrix representations of $\cL(A,C)$ and $\cL(\bA,\bC)$,  see above,  are $\rL(A,C)$ and $\rL(\bA,\bC)$:
\begin{equation}\label{matrepLAB}
\begin{aligned}
&\rL(A,C)=A^\top\otimes I_n -I_n\otimes C,\\
&\rL(\bA,\bC)=\begin{bmatrix} A_1^\top \otimes I_n-I_n\otimes C_1\\\vdots\\ A_p^\top \otimes I_n-I_n\otimes C_p\end{bmatrix}
\end{aligned}
\end{equation}

The second line in \eqref{defLAB}  shows that $Z$ is in the nullspace  of $\mathcal{L}({\bf A},{\bf C})$ if and only if it is in  $Hom_{\mathcal{A}}(M_{\bf A},M_{\bf C}) $.   Thus we obtain   the important equation 
\begin{equation}\label{dimfor}
n^{2}=[M_{\bA},M_{\bC}]+ \rank \rL(\bA,\bC).
\end{equation}
 
 Assuming that  ${\bf B}=S{\bf A}S^{-1}$ we deduce as in [17,Eq. 1.4]  the relation
 \begin{equation}\label{eq1.4}
 \rL(\bB,\bY)=\diag(T\otimes I_n,\ldots,T\otimes I_n)\rL(\bA,\bY)(T^{-1}\otimes I_n), \quad T=(S^\top)^{-1}
 \end{equation}

\subsection{Invariants}\label{subsec:inv}
To find the invariants and to  define the morphism $\varphi$ we work for a while not over $k$ but over the commutative polynomial ring 
 $R=k[y_{s,t,l}]$ in $pn^{2}$ variables  with the matrices $Y_{l}=[y_{s,t,l}] \in R^{n\times n}$ and the $p$-tuple ${\bf Y}=(Y_{1}, \ldots ,Y_{p})$. 
 
Define
\begin{equation}\label{defLNnpr}
L(n,p,r)={pn^2\choose r}\times{n^2\choose r}, \quad N(n,p,r)=\sum_{l=0}^r {pn^2+l-1 \choose l}, \quad r\in[n^2].
\end{equation}
Let $\rP(pn^2,r)$ be the subspace of homogeneous  polynomials of degree $r\in[n^2]\cup\{0\}$
in $pn^2$ variables in the entries of $\bY=(Y_1,\ldots,Y_p)$.  Choose a basis in this space consisting of monomials in the entries of $\bY\in V(n,p)$.  Each monomial is 
\begin{equation*}
\bY^{\bJ}=\prod_{s=t=l=1}^{n,n,p} y_{st,l}^{j_{st,l}},  \quad \bJ=(J_1,\ldots,J_p)\in (\Z_+^{n\times n})^p, \sum_{s=t=l=1}^{n,n,p} j_{st,l}=r.
\end{equation*}
Here $\Z_+$ is the set of nonnegative integers.
The dimension of this subspace is ${pn^2+r-1\choose r}$ \cite{F1}.     Order this basis in a lexicographical order.
We choose a basis in the space of polynomials of at degree $\le r$ in the entries of $\bY$, denoted as $\hat\rP(pn^2,r)$, that consists of monomials  of degree $\le r$:
first $1$,  then the basis of $\rP(pn^2,1)$,$\dots$, the basis of $\rP(pn^2,r)$.
Hence, $\dim \hat\rP(pn^2,r)=N(n,p,r)$.
A polynomial $f\in \hat\rP(pn^2,r)$ is represented in the above basis  by a row $\bbf^\top, \bbf\in k^{N(p,n,r)}$.  

Let $\bA\in V(n,p)$.  For each $r\in[n^2]$ we associate with $\bA$ a matrix  $F(r,\bA)\in k^{L(n,p,r)\times N(n,p,r)}$ obtained as follows.   Consider all $r$-minors of $\rL(\bA,\bY)$, where $\bY=(Y_1,\ldots,Y_p),Y_l=[y_{ij,l}], i,j\in[n], l\in[p]$, and all entries of $\bY$ are variables.  The number of such minors is $L(n,p,r)$ \cite{F1}.
Each minor is a polynomial of degree at most $r$ in $pn^2$ variables.
For $\alpha\subseteq [pn^2],\beta\subseteq [n^2], |\alpha|=|\beta|=r$ let $\bbf(\alpha,\beta,,{\bf A})^\top$ be the vector that represents the polynomial $\det \rL(\bA,\bY)[\alpha,\beta]$ in the above basis of $\hat P(pn^2,r)$.   Arrange these rows in a lexicographical order in $(\alpha,\beta)$ into the matrix $F(r,\bA)\in k^{L(n,p,r)\times N(n,p,r)}$ matrix.    Let  $\rW(r,\bA)$ be the row space of $F(r,\bA)$, and  $\rho(r,\bA)=\rank F(r,\bA)$.
Denote by $\widetilde W(r,\bA)$  the polynomial subspace  spanned by all $r$-minors of $\rL(\bA,\bY)$.    Observe that $\dim \widetilde W(r,\bA)=\rho(r,\bA)$.  Bring $F(r,\bA)$ to its reduced row echelon form $E(r,\bA)$.  
Observe that the rows $1,\ldots,\rho(r,\bA)$ of $E(r,\bA)$ give a unique basis of $\widetilde W(r,\bA)$. 
\begin{theorem}\label{funthm}
Let $\bA,\bB\in V(n,p)$.  Then $\bA\approx \bB$ if and only if $E(r,\bA)=E(r,\bB)$ for all $r\in[n^2]$.
\end{theorem}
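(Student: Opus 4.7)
The plan is to prove both implications by relating, for each $r\in[n^{2}]$, the polynomial subspace $\widetilde W(r,\bA)$ spanned by the $r$-minors of $\rL(\bA,\bY)$ to the representation-theoretic data coming from the dimension formula \eqref{dimfor}; Corollary \ref{cor1} will then close the loop.

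For the ``only if'' direction I would start from $\bB=S\bA S^{-1}$ and invoke \eqref{eq1.4}, which exhibits $\rL(\bB,\bY)=P\,\rL(\bA,\bY)\,Q$ with $P=\diag(T\otimes I_{n},\ldots,T\otimes I_{n})$ and $Q=T^{-1}\otimes I_{n}$, both invertible and depending only on $S$, \emph{not} on the indeterminates $\bY$. Two applications of the Cauchy--Binet formula \eqref{CB} then show that every $r$-minor of $\rL(\bB,\bY)$ is a $k$-linear combination of $r$-minors of $\rL(\bA,\bY)$, and conversely. Hence $\widetilde W(r,\bA)=\widetilde W(r,\bB)$ for every $r$. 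Since $E(r,\cdot)$ is the reduced row echelon form of a matrix whose row space in the fixed basis of $\hat\rP(pn^{2},r)$ equals $\widetilde W(r,\cdot)$, the uniqueness of the RREF of a subspace yields $E(r,\bA)=E(r,\bB)$.

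For the converse direction, the assumption $E(r,\bA)=E(r,\bB)$ for all $r$ immediately gives $\widetilde W(r,\bA)=\widetilde W(r,\bB)$. The key observation I would use is that for any specialization $\bY=\bC\in V(n,p)$,
\[
\rank\rL(\bA,\bC)\ge r \iff \exists\, f\in\widetilde W(r,\bA)\ \text{with}\ f(\bC)\ne 0,
\]
because the rank of a matrix over a field is the largest $r$ for which some $r$-minor is nonzero. The equality of polynomial subspaces then forces $\rank\rL(\bA,\bC)=\rank\rL(\bB,\bC)$ for every $\bC\in V(n,p)$. Plugging into \eqref{dimfor} with $\bC$ in place of $\bY$ gives $[M_{\bA},M_{\bC}]=[M_{\bB},M_{\bC}]$ for every $n$-dimensional $\mathcal{A}$-module $M_{\bC}$. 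Corollary \ref{cor1} now concludes $M_{\bA}\cong M_{\bB}$, which is exactly $\bA\approx\bB$.

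The main obstacle is the bridge between the polynomial identity $\widetilde W(r,\bA)=\widetilde W(r,\bB)$ and the pointwise rank equality at every $\bC$: once one sees it, the argument is routine, but it is precisely the step that converts the combinatorial invariant $E(r,\bA)$ into the module-theoretic invariants $[M_{\bA},M_{\bC}]$ demanded by Corollary \ref{cor1}. All remaining ingredients---Cauchy--Binet, the $\bY$-independence in \eqref{eq1.4}, and uniqueness of the reduced row echelon form---are either standard or already recorded in Section \ref{subsec:prelm} and Section \ref{subsec:LAC}.
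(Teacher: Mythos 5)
Your proposal is correct and follows essentially the same route as the paper's own proof: the ``only if'' direction via \eqref{eq1.4} and Cauchy--Binet to establish $\widetilde W(r,\bA)=\widetilde W(r,\bB)$ and hence equality of RREFs, and the ``if'' direction by deducing pointwise rank equality of $\rL(\bA,\bC)$ and $\rL(\bB,\bC)$ from equality of the minor spans and then invoking \eqref{dimfor} together with Corollary~\ref{cor1}. The only cosmetic difference is that you introduce a separate symbol $\bC$ for specializations of the indeterminate tuple $\bY$, which is a slight clarification over the paper's reuse of $\bY$ for both roles.
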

\begin{proof}
Assume first that $\bB=S\bA S^{-1}$.  Then \eqref{eq1.4} holds.
Apply Cauchy-Binet identity \eqref{CB} for the right-hand side of the first identity in \eqref{eq1.4}: 
\begin{equation*}
\begin{aligned}
&\det \rL(\bB,\bY)[\alpha,\beta]=\sum_{\gamma,\delta} \det T_1[\alpha,\gamma]\det \rL(\bA, \bY)[\gamma,\delta] \det T_2[\delta,\beta],\\
&T_1=\diag(T\otimes I_n,\ldots,T\otimes I_n), T_2=T^{-1}\otimes I_n, |\alpha|=|\beta|=|\gamma|=|\delta|=r.
\end{aligned}
\end{equation*}
Hence,  the $r$-minor $\det \rL(\bB,\bY)[\alpha|\beta]$ is a linear combination of $r$-minors of $\rL(\bA,\bY)$.  Therefore, $\widetilde W(r,\bB)\subseteq \widetilde W(r,\bA)$. As $\bA=S^{-1}\bB S$ we deduce that $\widetilde W(r,\bB)= \widetilde W(r,\bA)$.   Thus,
$F(r,\bB)$ is row equivalent to $F(r,\bA)$, and $\E(r,\bA)=E(r,\bB)$ for $r\in [n^2]$.

Assume now that $E(r,\bA)=E(r,\bB)$ for $r\in [n^2]$.   Hence,   for $r\in[n^2]$ one has the equality $\widetilde{W}(r,\bA)=\widetilde{W}(r,\bB)$.  
Corollary  \ref{cor1} states that $\bA$ and $\bB$ are simultaneous similar if and only if
\begin{equation}\label{Boncon}
[\rM_{\bA},\rM_{\bX}]=[\rM_{\bB},\rM_{\bX}] \textrm{ for all } \bX\in (k^{n\times n})^p.
\end{equation}
In view of \eqref{dimfor} we obtain that  $\bA$ and $\bB$ are simultaneous similar if and only if
\begin{equation}\label{Boncon1}
\rank \rL(\bA,\bX)=\rank \rL(\bB,\bX) ) \textrm{ for all } \bX\in (k^{n\times n})^p.
\end{equation}
(Compare that with \cite[Theorem 1.1]{D}.)  The $r$-minors of $\rL(\bA,\bY)$ span $\widetilde{W}(r,\bA)$, hence
\begin{equation*}
r> \rank\rL(\bA,\bX)\iff f(\bX)=0 \quad \forall f\in \widetilde{W}(r,\bA).
\end{equation*}
Similarly,
\begin{equation*}
r> \rank\rL(\bB,\bX)\iff f(\bX)=0 \quad \forall f\in \widetilde{W}(r,\bB).
\end{equation*}
Since  $\widetilde{W}(r,\bA)=\widetilde{W}(r,\bB)$ we deduce that \eqref{Boncon1} holds.
\end{proof}

We claim that for each $\bA\in V(n,p)$ the set of $r$-minors of $\rL(\bA,\bY)$ contains a nontrivial polynomial of degree $r$.
Observe that if $\bY\in V(n,p)$, and one of $Y_l\in k^{n\times n}$ is invertible, then there exists at least one $r$-minor of $\rL(\bA,\bY)$ of degree $r$ for each $r\in[n^2]$.
Indeed,  the homogeneous part of degree $r$ of the minor $\det\rL(\bA,\bY)[\alpha,\beta]$ is $\det \rL(\0,\bY)[\alpha,\beta],|\alpha|=|\beta|=r$.  
Note that the equation $Z0-Y_lZ=0$ has a unique solution $Z=0$.  Hence, $\rank\rL(\0,\bY)=n^2$.  Hence for each $r\in[n^2]$
there exists a minor $\det\rL(\0,\bY)[\alpha,\beta]\ne 0$, which yields that the degree of this minor is $r$.  
\subsection{ A proof of Theorem \ref{thm1} and the separating stratification of $(k^{n\times n})^{p}$}\label{subsec:prfthm1}
Let 
\begin{equation}\label{defblbm}
\begin{aligned}
&l_j=L(n,p,j), m_j=N(n,p,j), j\in[n^2], \quad \bl=(l_1,\ldots,l_{n^2}), \bm=(m_1,\ldots,m_{n^2}),\\
&Y(\bl,\bm)=\prod_{j=1}^{n^2} k^{l_j\times m_j}.
\end{aligned}
\end{equation}
Define the map
\begin{equation}\label{defphi}
\begin{aligned}
&\varphi: V(n,p)\rightarrow Y(\bl,\bm),\\
&\varphi(\bX)=(F(1,\bX),\ldots,F(n^2,\bX)).
\end{aligned}
\end{equation}
Clearly, $\varphi$ is a polynomial map in the entries of $\bX$, hence it is a morphism.

Theorem \ref{funthm} implies that $\varphi$ has the properties claimed in Theorem \ref{thm1}. The functions ${\bf A} \mapsto E(r,{\bf A})$ are not morphisms but their restrictions to the preimages of the strata in $Y({\bf l,m})$ are morphisms.

 By section \ref{subsec:ref} each $X=k^{l_{i}\times m_{i}}$ has the separating stratification $\pi_{F}:X_{F}\rightarrow  E_{F}$ for the $GL_{l_{i}}$-action with $F \subseteq [min(l_{i},m_{i})]$. For ${\bf F}=(F_{1},F_{2},\ldots ,F_{n^{2}})$ with $F_{i} \subseteq [min(l_{i},m_{i})]$  the $$S_{\bf F}=\prod_{i \in [n^{2}]} k^{l_{i}\times m_{i}}_{F_{i}}$$ are then the strata for the induced separating stratification of  $\prod_{i \in [n^{2}]} k^{l_{i}\times m_{i}}$.  Thus there are very many strata. Their non-empty inverse images $\phi^{-1}(S_{\bf F})$ are the wanted strata of $V(n,p)$. Their description seems to be a hard problem.
\section{Separating stratifications for other actions}\label{sec:oa} 
\subsection{Varieties of finite dimensional modules}\label{subsec:vfdm}
 Any  algebra $\cA$ with $p$ generators is isomorphic to $k\langle X_{1},X_{2},\dots ,X_{p}\rangle/I$ for some ideal I. Then the variety $mod^{n}\,\cA$ contains the tuples $m=(m_{1},m_{2},\ldots ,m_{p})$ with $P(m_{1},m_{2}, \ldots ,m_{p})=0$ for some generators  $P$ of $I$. Thus  $mod^{n}\,\cA$ is a closed $GL_{n}$ stable subvariety of $V(n,p)$ from which it inherits a separating stratification. 
 
This remark applies in particular to the path algebra of any finite quiver and its quotients.  Our next subsection is just the special case of a quiver with two points
$x, y$ and $p$ arrows from $x$ to $y $,  and we explain the outcome in the language
of matrices.
\subsection{Rectangular matrices}\label{subsec:rm} 
For any natural numbers  $m,n,p$  we define a morphism 
\begin{equation}\label{fwmnp}
\begin{aligned}
&W(m,n,p):=(k^{m\times n})^p,\\
&f:W(m,n,p) \rightarrow V(m+n,p+2),\\
&f(A_1,\ldots,A_p)=(B_1,\ldots,B_{p+2}),\\
&B_{i}=\left [
\begin{array}{cc}
0&A_{i}\\
0& 0\\
\end{array}
\right ],i\in[p], \quad B_{p+1}=\left [
\begin{array}{cc}
I_{m}&0\\
0& 0\\
\end{array}
\right ];\quad B_{p+2}=\left [
\begin{array}{cc}
0&0\\
0& I_{n}\\
\end{array}
\right ].
\end{aligned}
 \end{equation} 
 
  A trivial calculation with block-matrices  starting with $B_{p+1}$ and $B_{p+2}$ shows that two elements $\bA,\bA'$ in   $W(m,n,p)$ are in the same $GL_{m}\times GL_{n}$-orbit iff $f\bA$ and $f\bA'$ are in the same $GL_{n+m}$-orbit. Thus $W(m,n,p)$ inherits a separating stratification from $V(m+n,p+2)$.
\subsection{Reduction from tuples  to pairs}\label{subsec:tpr} 
For any $n$ and $p\geq 3$ we construct a morphism $$f:V(n,(p-1)(p-2)) \rightarrow V(pn,2)$$ such that the preimage of any $GL_{pn}$- orbit is empty or a $GL_{n}$-orbit. We give the construction only for $p=4$ thereby reducing $6$-tupes to pairs. Then the general case will be obvious. It is convenient to denote a $6$-tuple in $V(n,6)$ by $(x_{31},x_{41},x_{42},y_{13},y_{14}, y_{24})$. Such a tuple is mapped by $f$ to the following pair $(x,y)$ of $4n\times 4n$- block-matrices
 \[
\left [
\begin{array}{cccc}
0&0&0&0\\
1&0&0 & 0\\
x_{31}&1&0&0\\
x_{41}&x_{42}&1&0
\end{array}
\right ] and 
\left [
\begin{array}{cccc}
0&1&y_{13}&y_{14}\\
0&0&1 & y_{24}\\
0&0&0&1\\
0&0&0&0
\end{array}
\right ].
\]
 
 It is a lengthy calculation with block-matrices to verify that the preimage of an orbit is empty or an orbit. This construction from \cite[3.2]{B6} refines the one given earlier by Brenner  in  \cite{B} .

\subsection{Reduction from pairs to commuting nilpotent pairs }\label{subsec:GP} 

This surprising reduction is due to Gelfand and Ponomarev in \cite{G}. We define a morphism 
$$f:mod^{n}\,k\langle X,Y \rangle  \rightarrow mod^{4n}\,k\langle X,Y \rangle/I$$ where $I$ is generated by $XY-YX$ and by all monomials of degree $3$.

 Note that the quotient algebra is commutative of dimension $6$  with only one simple module which is of dimension $1$. Nevertheless the classification of all modules implies the classification of all modules over the free algebra in two variables which has  many simples in each dimension.
 
 To define $f$ we associate to a pair $(X,Y)$ of $n\times n$ matrices the  following pair $(A(X,Y),B(X,Y))$ of two $4n\times 4n$-matrices
 \[ A(X,Y)=
\left [
\begin{array}{cccc}
0&0&0&0\\
I_{n}&0&0 & 0\\
0&0&0&0\\
0&I_{n}&X&0
\end{array}
\right ], \quad B(X,Y)=
\left [
\begin{array}{cccc}
0&0&0&0\\
0&0&0&0\\
I_{n}&0&0&0\\
0&X&Y&0
\end{array}
\right ].
\]
 Here $I_{n}$  is the identity matrix.
 
  First one checks that the pair $(A(X,Y),B(X,Y))$ satisfies  all conditions  imposed by $I$.  It remains  to show that $(X,Y)$ and $(X',Y')$ are conjugate under
  $GL_{n}$ iff $(A(X,Y),B(X,Y))$ and  $(A(X',Y'),B(X',Y'))$ are conjugate under $GL_{4n}$. This is again a straightforward calculation with block-matrices.

  In the language of representation theory all four reductions are induced by appropriate functors between the module categories of the involved algebras.
  These functors are called representation embeddings. The interested reader can find many more examples in \cite{B6}. There the proofs of the above reductions are conceptual with only very few  matrix-calculations.

\section{Comments and related results}\label{sec: crr}
\subsection{A similarity test }\label{subsec:simt}
The recent article  \cite{D} contains the nice result that two tuples $\bA,\bB$ of the same size are simultaneous similar if and only if the  ranks of two linear matrix pencils associated to $\bA$ and $\bB$ coincide for all test matrices. The proof  given there relies on the  matricization of homomorphisms between finite- dimensional modules and the  result \cite[prop 5.2]{Sm} in representation theory. These two things are already contained in our much earlier articles \cite{F1,B3}.

Furthermore  section 7 of \cite{D} refers to \cite{Br} for a deterministic polynomial time algorithm to decide whether two modules are isomorphic or not.  Again there is the much earlier article \cite{B2} to decide this question.2
\subsection{Representations of tame quivers}
For representations of tame quivers - whence in particular for matrix pencils - there is a finite separating stratification by smooth subvarieties with smooth rational geometric quotients just as in our subsections \ref{subsec:ref} and \ref{subsec:rnf}.  This is proved in \cite{B5}.
  
\subsection{Wildness of pairs of matrices}\label{subsec:wild}
Mumford  writes in the second edition of  the famous book \cite{M} on geometric invariant theory on page 167 in a footnote: 

``The classification of two
non-commuting linear maps $f,g:V \rightarrow V\textrm{ mod }GL(V)$ is sometimes referred to as an `impossible' problem. It is not clear to me why this is said. If $dim \,V=n$ then for each $n$ there might be a finite number of explicitely describable algebraic varieties $W_{i}^{n}$ whose points are in natural $1-1$ correspondence with suitable `strata' $S_{i}^{n}$ in the full set $S_{n}$ of pairs $(f,g);k^{n} \rightarrow k^{n}$ mod $GL(n)$.''

It is clear that our stratification is not the solution to this problem because neither the strata  nor the quotients are explicitly described - or are they?

At the end of this article we indicate by four remarks that the above problem is at least very hard if not impossible.
\begin{itemize}
\item Already for pairs of complex ${2\times 2}$- matrices there  exist no normal forms as shown in  \cite{B7} answering a question of Kontsevich in the negative.
\item For pairs of complex $n \times n$ - matrices there is a geometric quotient for the set $S_{n}$ corresponding to simple modules ( see \cite{P}). One conjectures that the quotients are rational, but this is only proven for $n\leq 4$ in \cite{F}.
\item For pairs of $3 \times 3$ matrices one stumbles on  an example of  a smooth variety with a free $PGL(3)$-action having no geometric quotient in \cite[6.3]{B4}. This example is much more elementary than the counter-examples on pages 83-86 of \cite{M}. 
\item Using the undecidability of the word problem for groups it is shown in \cite{Ba} that the theory of pairs of matrices is undecidable in a certain first-order theory.
\end{itemize} 
\subsection{Comments on the papers \cite{F1} and \cite{B3}}\label{subsec:comF1}
In the paper \cite{F1} the second author observed that if $\bA\approx\bB$ then equality  \eqref{eq1.4} holds.  Furthermore,  he showed that the morphism $\varphi$ in Theorem \ref{thm1} maps a $GL_{n}$-orbit into an $H$-orbit. The first named author proved in \cite{B3} our Theorem \ref{thm3} which implies that the inverse image of an $H$-orbit is empty or a $GL_{n}$-orbit.   
Under the assumption that $k$ is the field of the complex numbers $\C$ the second named author showed in \cite{F1} that
there are a finite number of distinct orbits that statisfy the conditions  $E(r,\bA)=E(r,\bB)$ when  the last equality in \eqref{eq1.4} holds.  Needless to say that for $k=\C$ one can use analysis and replace Zariski topology with the standard topology on $\C^m$.  
\subsection{Belitskii's paper on ``Normal forms in matrix spaces''}
Belitskii in \cite{Be} proposed a  finite algorithm for reducing to a canonical form matrices of an arbitrary system of linear mappings,  which is applicable to the simultaneous similarity of matrices.   Two sets of matrices are simultaneous similar if and only if they have the same canonical form. The algorithm of Belitskii affords the knowledge of the eigenvalues of the involved matrices.  So the field has to be algebraically closed and the canonical form can be determined only 'theoretically' whereas our method involves only rational operations with the entries of the
involved matrices.

\end{document}